\definecolor{grey}{rgb}{.7,.7,.7}
\newcommand{\e}{\varepsilon}
\newcommand{\N}{\mathbb{N}}
\newcommand{\de}{\partial}
\renewcommand{\phi}{\varphi}
\renewcommand{\div}{\mathop{\mathrm{div}}}
\renewcommand{\epsilon}{\varepsilon}
\def\R{\mathbb{R}}
\def\H{\mathcal{H}}
\theoremstyle{plain}
\newtheorem{thm}{Theorem}[section]
\newtheorem*{thm*}{Theorem}
\newtheorem{lem}[thm]{Lemma}
\newtheorem{prop}[thm]{Proposition}
\theoremstyle{definition}
\newtheorem{defin}[thm]{Definition}
\theoremstyle{remark}
\newtheorem{rem}[thm]{Remark}
\title[Weighted Cheeger sets are domains of isoperimetry]{Weighted Cheeger sets\\ are domains of isoperimetry}
\author{Giorgio Saracco}
\address{Department Mathematik, Universit\"at Erlangen-N\"urnberg, Cauerst. 11, 91058 Erlangen - Germany}
\email{saracco@math.fau.de}
\thanks{G. Saracco has been supported by the 2016 INDAM-GNAMPA project \textit{Variational problems and geometric measure theory in metric spaces}}
\subjclass[2010]{Primary: 46E35. Secondary: 49Q10, 28A75}
\keywords{Sobolev embeddings, trace theorems, Cheeger problem}
\begin{document}

\begin{abstract}
We consider a generalization of the Cheeger problem in a bounded, open set $\Omega$ by replacing the perimeter functional with a Finsler-type surface energy and the volume with suitable powers of a weighted volume. We show that any connected minimizer $A$ of this weighted Cheeger problem such that $\H^{n-1}(A^{(1)} \cap \de A)=0$ satisfies a relative isoperimetric inequality. If $\Omega$ itself is a connected minimizer such that $\H^{n-1}(\Omega^{(1)} \cap \de \Omega)=0$, then it allows the classical Sobolev and $BV$ embeddings and the classical $BV$ trace theorem. The same result holds for any connected minimizer whenever the weights grant the regularity of perimeter-minimizer sets and $\Omega$ is such that $|\de \Omega|=0$ and $\H^{n-1}(\Omega^{(1)} \cap \de \Omega)=0$.
\end{abstract}

 \hspace{-3cm}
 {
 \begin{minipage}[t]{0.6\linewidth}
 \begin{scriptsize}
 \vspace{-3cm}
 This is a pre-print of an article published in \emph{Manuscripta Math.}. The final authenticated version is available online at: http://dx.doi.org/10.1007/s00229-017-0974-z
 \end{scriptsize}
\end{minipage} 
}

\maketitle

\section{Introduction} 

The celebrated Cheeger problem, firstly proposed in \cite{Cheeger}, consists in searching for sets $A \subseteq \R^n$ minimizing the ratio
\begin{equation}\label{eq:intro1}
\inf_{E\subseteq \Omega} \frac{P(E)}{|E|}\,,
\end{equation}
where $\Omega$ is a given open bounded set. This problem firstly arose in connection with estimates on the first eigenvalue of the Laplacian (see for instance \cite{KF}) and then appeared in several other contexts such as the capillarity problem (see for instance \cite{LeoSar2016}), image reconstruction (see for instance \cite{ROF}) and quantum waveguides (see for instance \cite{KreKri, KrePra, LeoPra2015}). Two nice overviews can be found in \cite{Leo, Parini2011}. The classical problem has then been tweaked in different directions: it has been proposed in the Gaussian setting (see \cite{CMjN}) or modified to the non-local fractional perimeter (see \cite{BPL}). We are here interested in generalizing the local, Euclidean version in a way that embraces both the weighted problem proposed in \cite{Carlier, Caselles2009, Hassani2005, Ionescu2005}, and the one proposed in \cite{Pratelli} where suitable powers of the volume are considered. Specifically, given an open bounded set $\Omega \subseteq \R^n$, we deal with the following minimization problem
\begin{equation}\label{eq:CG}
\inf_{E\subseteq \Omega} \frac{\int_{\de^*E} g(x, \nu_E(x))\, d\, \mathcal{H}^{n-1}(x)}{\left|\int_E f\, dx \right|^{1/\alpha}}\,,
\end{equation}
where $\alpha \in \left[1, \frac{n}{n-1} \right)$, $f(x)$ is a positive $L^\infty$ function and $g(y,v)$ is a scalar function, lower semi-continuous in $(y,v) \in \R^n \times \R^n$, convex and positively $1$-homogeneous in $v$ and such that for some $C>0$ one has
\begin{equation*}
\frac{1}{C} |v| \le g(y,v) \le C|v|\,,
\end{equation*}
for all $y \in \R^n$, $v\in \R^n$. First, in Proposition \ref{existence} we prove the existence of minimizers for the proposed problem \eqref{eq:CG}. Second, we show that each connected minimizer, $A$, such that $\H^{n-1}(A^{(1)}\cap \de A)=0$ is a \emph{domain of isoperimetry}, by which we mean that it supports a relative isoperimetric inequality, i.e. there exists a constant $k(A)$ such that for all $E\subset A$
\begin{equation}\label{eq:intro2}
\min\left \{|E|; |A \setminus E|\right \}^{n-1} \leq k P(E; A)^n\,.
\end{equation}
Actually, in Theorem \ref{prop:traceinequality}, which we recall below, we prove a stronger inequality.
\begin{thm*}
Let $A$ be a connected minimizer of \eqref{eq:intro1} such that $\H^{n-1}(A^{(1)}\cap \de A)=0$. Then, there exists a positive constant $K$ depending only on $A$ such that 
\begin{equation}\label{eq:intro3}
	\min\{ P(E; \de A),P(A \setminus E; \de A) \} \leq K\, P(E; A)\,.
\end{equation}
\end{thm*}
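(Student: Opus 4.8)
The plan is to exploit the minimality of $A$ — which lets us use $E$ and $A\setminus E$ as competitors in \eqref{eq:CG} — together with a relative isoperimetric estimate for connected minimizers that is itself proved by compactness. Write $P_g(F):=\int_{\de^*F}g(x,\nu_F)\,d\H^{n-1}$, $V(F):=\int_F f\,dx$, $F_0:=P_g(A)$, $W:=V(A)$, and fix $E\subseteq A$ with $0<V(E)<W$ (the cases $|E|=0$ or $|A\setminus E|=0$ being trivial). Since $\H^{n-1}(A^{(1)}\cap\de A)=0$, Federer's theorem gives, up to $\H^{n-1}$-null sets, $\de^*E=(\de^*E\cap A^{(1)})\cup(\de^*E\cap\de^*A)$ with $\nu_E=\nu_A$ on the second piece, and $\de^*A=(\de^*A\cap\de^*E)\cup(\de^*A\cap\de^*(A\setminus E))$, while inside $A^{(1)}$ one has $\de^*E=\de^*(A\setminus E)$ with opposite normals. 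Let $b_E,b_{A\setminus E}$ be the $g$-masses (with normal $\nu_A$) of $\de^*A\cap\de^*E$ and $\de^*A\cap\de^*(A\setminus E)$; then $b_E+b_{A\setminus E}=F_0$, and the two-sided bound on $g$ yields $P(E;\de A)\le C\,b_E$, $P(A\setminus E;\de A)\le C\,b_{A\setminus E}$, as well as $P_g(E)=r_E+b_E$ and $P_g(A\setminus E)=\tilde r_E+b_{A\setminus E}$ with $r_E,\tilde r_E\in[\tfrac1C P(E;A),\,C\,P(E;A)]$. So it suffices to bound $\min\{b_E,b_{A\setminus E}\}$ by a multiple of $P(E;A)$.

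Using $E$ and $A\setminus E$ as competitors gives $F_0\,V(E)^{1/\alpha}\le P_g(E)\,W^{1/\alpha}$ and $F_0\,V(A\setminus E)^{1/\alpha}\le P_g(A\setminus E)\,W^{1/\alpha}$; with $t:=V(E)/W$ and $b_{A\setminus E}=F_0-b_E$ these rearrange to $b_E\le F_0\bigl(1-(1-t)^{1/\alpha}\bigr)+\tilde r_E$ and $b_{A\setminus E}\le F_0\bigl(1-t^{1/\alpha}\bigr)+r_E$. Since $\alpha\ge1$ we have $(1-t)^{1/\alpha}\ge1-t$ and $t^{1/\alpha}\ge t$, so distinguishing $t\le\tfrac12$ (use the first estimate) from $t\ge\tfrac12$ (use the second) we obtain
\[
\min\{b_E,b_{A\setminus E}\}\ \le\ \tfrac{F_0}{W}\,\min\{V(E),V(A\setminus E)\}\ +\ C\,P(E;A).
\]

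Thus everything reduces to proving $\min\{V(E),V(A\setminus E)\}\le c_A\,P(E;A)$ for all $E\subseteq A$; since $f\in L^\infty$ and $\min\{|E|,|A\setminus E|\}\le|A|$, this follows once we know the relative isoperimetric inequality \eqref{eq:intro2} for $A$, because $\min\{|E|,|A\setminus E|\}=\min\{|E|,|A\setminus E|\}^{(n-1)/n}\min\{|E|,|A\setminus E|\}^{1/n}\le k\,|A|^{1/n}\,P(E;A)$. I would prove \eqref{eq:intro2} by contradiction: otherwise there are $E_j\subseteq A$, which after relabelling satisfy $|E_j|\le|A|/2$, with $|E_j|^{(n-1)/n}>j\,P(E_j;A)$, so $P(E_j;A)\to0$. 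As $P(E_j;\de A)\le\H^{n-1}(\de^*A)=P(A)$, the perimeters $P(E_j)$ are bounded, hence a subsequence $\chi_{E_j}\to\chi_{E_\infty}$ in $L^1$ with $E_\infty\subseteq A$; lower semicontinuity of the relative perimeter gives $P(E_\infty;A)\le\liminf_jP(E_j;A)=0$, so by connectedness $E_\infty$ is $\emptyset$ or $A$, and since $|E_\infty|\le|A|/2$ we get $|E_j|\to0$. Then the global isoperimetric inequality gives $P(E_j;\de A)=P(E_j)-P(E_j;A)\ge c_n|E_j|^{(n-1)/n}-P(E_j;A)\ge\tfrac{c_n}{2}|E_j|^{(n-1)/n}$ for $j$ large, whence, by the splitting above,
\[
P_g(A\setminus E_j)\le F_0-\tfrac1C\,P(E_j;\de A)+C\,P(E_j;A)\le F_0-c'\,|E_j|^{(n-1)/n},
\]
while $V(A\setminus E_j)\ge W-\|f\|_\infty|E_j|$. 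Because $1/\alpha\le1$ one has $(W-\|f\|_\infty|E_j|)^{1/\alpha}\ge W^{1/\alpha}(1-\|f\|_\infty|E_j|/W)$, and since $|E_j|^{(n-1)/n}$ dominates $|E_j|$ as $|E_j|\to0$, for $j$ large $P_g(A\setminus E_j)\,W^{1/\alpha}<F_0\,V(A\setminus E_j)^{1/\alpha}$, i.e. $A\setminus E_j$ strictly beats $A$ in \eqref{eq:CG}, a contradiction.

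Combining the three displays,
\[
\min\{P(E;\de A),P(A\setminus E;\de A)\}\le C\min\{b_E,b_{A\setminus E}\}\le C\bigl(\tfrac{F_0}{W}\|f\|_\infty k\,|A|^{1/n}+C\bigr)P(E;A),
\]
which is \eqref{eq:intro3}. The genuinely hard part is the third step: the first two are bookkeeping with Federer's theorem and elementary facts about $s\mapsto s^{1/\alpha}$, but the isoperimetric estimate truly needs connectedness — without it $E$ could cut $\de A$ into two large pieces across a thin neck of vanishing relative perimeter — and its proof hinges on the compactness reduction to $|E_j|\to0$ and on the quantitative ``shaving'' estimate showing that removing the small layer $E_j$ strictly lowers the weighted Cheeger ratio. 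One must also take some care because a general minimizer need not be open, so the relevant lower semicontinuity has to be read on the measure-theoretic interior $A^{(1)}$.
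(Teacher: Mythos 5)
Your proof is correct, but it follows a genuinely different route from the paper's. The paper does not prove \eqref{eq:intro3} globally by hand: it invokes the equivalence (for connected sets, from Maz'ya's book) between \eqref{eq:intro3} and the \emph{local} condition $\sup_{x\in\de A}\lim_{\rho\to0^+}\sup\{P(E;\de A)/P(E;A):E\subset A\cap B_\rho(x)\}<\infty$, so the contradiction sets $E_j$ live in shrinking balls and $|E_j|\to0$ comes for free; it then concludes with the same ``shaving'' estimate you use (if $P_g(E_j;A)=o(P_g(E_j))$, then $P_g(A\setminus E_j)=P_g(A)-P_g(E_j)+2P_g(E_j;A)$ together with $P_g(E_j)\gtrsim|E_j|_f^{(n-1)/n}$, $1-x^{1/\alpha}\le(1-x)^{1/\alpha}$ and $\alpha<1^*$ strictly lowers the ratio). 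You instead argue globally: the Federer decomposition of $\de^*E$, $\de^*(A\setminus E)$ and $\de^*A$ plus testing minimality against \emph{both} $E$ and $A\setminus E$ reduces \eqref{eq:intro3} to the relative isoperimetric inequality \eqref{eq:intro2}, which you then prove by compactness and indecomposability (forcing $|E_j|\to0$) followed by the shaving step. Note the reversal of logical order: the paper derives \eqref{eq:intro2} \emph{from} \eqref{eq:intro3} (its Lemma 3.3), whereas you derive \eqref{eq:intro3} \emph{from} \eqref{eq:intro2}; your competitor bookkeeping in effect shows the two are equivalent for minimizers. What each buys: your argument is self-contained (no black-box citation for the localization) and makes explicit where connectedness enters — through the rigidity ``$P(E_\infty;A)=0\Rightarrow E_\infty\in\{\emptyset,A\}$'' — at the price of the boundary decomposition and the compactness step; the paper's is shorter given the citation. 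One shared caveat: your lower-semicontinuity step $P(E_\infty;A)\le\liminf_jP(E_j;A)$ really requires $A$ open (or Lebesgue-equivalent to an open connected set), since relative perimeter is only lower semicontinuous on open sets and $A^{(1)}$ need not be open; but the paper's appeal to Maz'ya's equivalence carries the same implicit assumption, and in the main application $A=\Omega$ is open, so this does not put you at a disadvantage.
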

This fact, up to our knowledge, has not been proved even for the classical Cheeger problem. We deem it important as for open, bounded and connected sets, equation \eqref{eq:intro2} is known to be equivalent to the existence of the classical Sobolev and $BV$ embedding operators (see \cite[Section 5.2.3 and Section 9.1.7]{Mazya2011}) while \eqref{eq:intro3} coupled with $P(A) = \H^{n-1}(\de A)$ to the existence of the $BV$ trace operator (see \cite[Section 9.6.4]{Mazya2011}). Then, we are able to infer the following theorem (see Theorem \ref{thm:main}).
\begin{thm*}
Let $\Omega$ be connected and such that $\H^{n-1}(\Omega^{(1)}\cap \de \Omega)=0$. Suppose it is a generalized Cheeger set in itself, i.e. it is an open, bounded set that realizes the infimum in \eqref{eq:CG} staged in $\Omega$ itself. Then, there exists a positive constant $k$ depending only on $\Omega$ such that for all $u \in W^{1,p}(\Omega)$
\[
\|u\|_{L^{\frac{np}{n-p}}(\Omega)} \leq k \|u\|_{W^{1,p}(\Omega)}\,,
\]
and for all $u\in BV(\Omega)$
\[
\|u\|_{L^1(\Omega)} \leq k \|u\|_{BV(\Omega)}\,,
\]
Moreover, there exists a linear continuous operator (the trace) $T: BV(\Omega) \to L^1(\de \Omega)$ such that for all $u\in BV(\Omega)$ continuous up to $\de \Omega$, one has $T(u) = u_{|\de \Omega}$.
\end{thm*}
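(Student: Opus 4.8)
The plan is to reduce the whole statement to Theorem~\ref{prop:traceinequality} applied with $A=\Omega$. This is legitimate: being a generalized Cheeger set in itself, $\Omega$ is an open, bounded (hence $|\Omega|<\infty$), connected minimizer of \eqref{eq:CG}, of finite perimeter, with $\H^{n-1}(\Omega^{(1)}\cap\de\Omega)=0$, so Theorem~\ref{prop:traceinequality} provides a constant $K=K(\Omega)$ such that $\min\{P(E;\de\Omega),P(\Omega\setminus E;\de\Omega)\}\le K\,P(E;\Omega)$ for every finite-perimeter $E\subseteq\Omega$. From here the two embeddings will follow once I extract the relative isoperimetric inequality \eqref{eq:intro2} for $\Omega$ and invoke Maz'ya's characterizations, while the trace statement will additionally require the identity $P(\Omega)=\H^{n-1}(\de\Omega)$, which I will recover from \eqref{eq:intro2} itself.

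First I would derive \eqref{eq:intro2}. Fix a finite-perimeter $E\subseteq\Omega$. Since $\de^*E\subseteq\overline E\subseteq\overline\Omega$ one has $P(E)=P(E;\Omega)+P(E;\de\Omega)$, and because the reduced boundaries of $E$ and $\Omega\setminus E$ coincide inside the open set $\Omega$ one also has $P(\Omega\setminus E;\Omega)=P(E;\Omega)$, hence $P(\Omega\setminus E)=P(E;\Omega)+P(\Omega\setminus E;\de\Omega)$. Applying the inequality of Theorem~\ref{prop:traceinequality}, either $P(E)\le(1+K)P(E;\Omega)$ or $P(\Omega\setminus E)\le(1+K)P(E;\Omega)$; in either case the Euclidean isoperimetric inequality gives $\min\{|E|,|\Omega\setminus E|\}^{(n-1)/n}\le c_n(1+K)P(E;\Omega)$, i.e. \eqref{eq:intro2} with $k=(c_n(1+K))^n$. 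As $\Omega$ is open, bounded and connected, \cite[Section 5.2.3 and Section 9.1.7]{Mazya2011} turn \eqref{eq:intro2} into the Sobolev embedding $W^{1,p}(\Omega)\hookrightarrow L^{np/(n-p)}(\Omega)$ for $1\le p<n$ and into $BV(\Omega)\hookrightarrow L^{n/(n-1)}(\Omega)$; since $|\Omega|<\infty$, Hölder's inequality upgrades the latter to the stated $L^1$--$BV$ bound.

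For the trace I would quote \cite[Section 9.6.4]{Mazya2011}, which, once \eqref{eq:intro3} holds (already obtained) and $P(\Omega)=\H^{n-1}(\de\Omega)$, produces the continuous linear operator $T\colon BV(\Omega)\to L^1(\de\Omega)$ that restricts to $u_{|\de\Omega}$ on functions continuous up to $\de\Omega$. Since $\de^*\Omega\subseteq\de\Omega$ always, only $\H^{n-1}(\de\Omega\setminus\de^*\Omega)=0$ is in question; by Federer's structure theorem this set is $\H^{n-1}$-equivalent to $(\de\Omega\cap\Omega^{(1)})\cup(\de\Omega\cap\Omega^{(0)})$, whose first piece is negligible by hypothesis, so I must show $\de\Omega\cap\Omega^{(0)}=\emptyset$. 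Here \eqref{eq:intro2} self-improves: for $x\in\overline\Omega$ put $v(r):=|\Omega\cap B_r(x)|$, which is locally Lipschitz and strictly positive for every $r>0$ because $\Omega$ is open; testing \eqref{eq:intro2} with $E=\Omega\cap B_r(x)$ and using $P(\Omega\cap B_r(x);\Omega)=\H^{n-1}(\de B_r(x)\cap\Omega)=v'(r)$ for a.e. $r$ (coarea formula, and $v(r)\le|\Omega|/2$ for $r$ small so that the minimum in \eqref{eq:intro2} equals $v(r)$) one gets $v(r)^{n-1}\le k\,v'(r)^n$ for a.e. small $r$, i.e. $(v^{1/n})'\ge(nk^{1/n})^{-1}$; integrating down to $r=0$, where $v(0)=0$, yields $v(r)\ge c\,r^n$. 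Thus $\Omega$ has strictly positive lower density at every point of $\overline\Omega$, so no point of $\de\Omega$ can lie in $\Omega^{(0)}$, which gives $\H^{n-1}(\de\Omega)=\H^{n-1}(\de^*\Omega)=P(\Omega)$ and completes the proof.

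Most of this is bookkeeping, and the step I expect to be the main obstacle is precisely the identity $P(\Omega)=\H^{n-1}(\de\Omega)$: it can fail for open sets of finite perimeter with $\H^{n-1}(\Omega^{(1)}\cap\de\Omega)=0$ that do not support a relative isoperimetric inequality, so it is essential that it be deduced from \eqref{eq:intro2}, hence ultimately from minimality through Theorem~\ref{prop:traceinequality}. Making the self-improvement argument rigorous — absolute continuity of $v^{1/n}$ on compact subintervals of $(0,\infty)$, the coarea identity $v'(r)=\H^{n-1}(\de B_r(x)\cap\Omega)$ for a.e. $r$, and the selection of radii on which the minimum in \eqref{eq:intro2} equals $v(r)$ — and then dovetailing the resulting lower-density bound with Federer's theorem is the part to carry out with care.
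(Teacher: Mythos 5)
Your proposal is correct and follows essentially the same route as the paper: Theorem~\ref{prop:traceinequality} gives \eqref{eq:intro3}, your derivation of \eqref{eq:intro2} reproduces Lemma~\ref{lem:relperimeter}, and your density self-improvement argument showing $\Omega^{(0)}\cap\de\Omega=\emptyset$ (hence $P(\Omega)=\H^{n-1}(\de\Omega)$ via Federer) is exactly the paper's Lemma~\ref{lem:PH}, with the same final appeal to Maz'ya's characterizations. The only difference is that you inline the two lemmas rather than cite them.
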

Notice that without further hypotheses on $\Omega$ and on the weights $f,g$ the previous theorem can not be extended to any connected minimizer $A$ in $\Omega$, as one would need to prove that $A$ is open and that $\H^{n-1}(A^{(1)} \cap \de A)=0$. Whenever one can dispose of the regularity theory, a condition on $\Omega$ that yields the openness of minimizers is given by $|\de \Omega|=0$. Again exploiting the regularity of minimizers, one can show that $\H^{n-1}(\Omega^{(1)} \cap \de\Omega)=0$ is enough to have the equality $\H^{n-1}(A^{(1)} \cap \de A)=0$.

This kind of results is not completely new and it should not surprise that perimeter minimizers exhibit ``good isoperimetric properties''; a result in similar fashion was obtained in \cite{DS} for quasi-minimizers with respect to any variation.
%
%

\section{Preliminaries}

We start the section recalling some basic facts of the theory of sets of finite perimeter (for more details one can refer to \cite{AFP}) and then we introduce the definitions of weighted volume (see Definition \ref{def:weightedvol}), weighted perimeter (see Definition \ref{def:weightedper}) and prove an isoperimetric inequality between these weighted quantities (see Proposition \ref{prop:weighisop}).

For a Borel set $E \subset \R^n$ we will denote by $|E|$ its $n$-dimensional Lebesgue measure and by $P(E)$ its perimeter in the sense of De Giorgi, i.e.
\begin{equation}\label{eq:perimeter}
P(E):=\sup \left\{ \int_{\R^n} \chi_E(x) \div h(x)\, dx\,: h\in C^1_c(\R^n;\, \R^n)\,, \|h\|_{\infty} \leq 1\right\}\,.
\end{equation}
If $P(E)<\infty$ we say that $E$ is a set of finite perimeter. In this case one has that the perimeter of $E$ agrees with the total variation $|D\chi_{E}|(\R^n)$ of the vector-valued Radon measure $D\chi_{E}$. This allows us to define the relative perimeter $P(E; \Omega) = |D\chi_{E}|(\Omega)$ for any pair of Borel sets $E, \Omega \subset \R^n$.
\\
We recall that a point $x\in \R^n$ is said to be of density $\beta \in [0,1]$ for a Borel set $E\subset \R^n$ if the limit
\[
\theta(E)(x) := \lim_{r\to0^+} \frac{|E\cap B_r(x)|}{|B_r(x)|}
\]
exists and equals $\beta$. The set of points of density $\beta \in [0,1]$ for $E$ is denoted by $E^{(\beta)}$. We also define the essential boundary of $E$ as $\de^{e}E := \R^{n}\setminus (E^{(0)}\cup E^{(1)})$. Finally, for a set of finite perimeter $E$ we say that a point $x\in \de^e E$ belongs to the reduced boundary $\de^* E$ if the following limit
\[
\lim_{r\to 0^+} - \frac{D\chi_E(B_r(x))}{| D\chi_E|(B_r(x))}
\]
exists and belongs to $\mathbb{S}^{n-1}$. In this case we denote such a limit by $\nu_E(x)$ and call it the outer normal of $E$ at $x$. With these definitions in mind we can state De Giorgi's and Federer's structure theorems.
\begin{thm}[De Giorgi Structure Theorem]\label{thm:DeGiorgi}
Let $E$ be a set of finite perimeter. Then,
\begin{itemize}
\item[(i)] $\de^{*}E$ is countably $\H^{n-1}$-rectifiable in the sense of Federer (see \cite{FedererBOOK});
\item[(ii)] for all $x\in \de^{*}E$, $\chi_{E_{x,r}} \to \chi_{H_{\nu_E(x)}}$ in $L^{1}_{loc}(\R^{n})$ as $r\to 0^{+}$, where $E_{x,r}:= (E-x)/r$ and $H_{\nu_{E}(x)}$ denotes the half-space through $0$ whose outer normal is $\nu_{E}(x)$;
\item[(iii)] for any Borel set $\Omega$, $P(E;\Omega) = \H^{n-1}(\Omega \cap \de^{*}E)$, thus in particular $P(E)=\H^{n-1}(\partial^* E)$;
\item[(iv)] $\int_{E}\div g = \int_{\de^{*}E} g\cdot \nu_{E}\, d\H^{n-1}$ for any $g\in C^{1}_{c}(\R^{n};\R^{n})$.
\end{itemize}
\end{thm}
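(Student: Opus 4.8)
The plan is to follow De Giorgi's classical argument, deducing all four assertions from a blow-up analysis of $E$ at the points of the reduced boundary. The natural starting point is the polar (Besicovitch) decomposition of the $\R^n$-valued Radon measure $D\chi_E$: writing $D\chi_E = -\nu_E\,|D\chi_E|$ (the sign being dictated by the definition of $\nu_E$ given above), the Besicovitch differentiation theorem guarantees that the unit field $\nu_E$ is defined $|D\chi_E|$-almost everywhere and, by the very definition of $\de^*E$, that $|D\chi_E|$ is concentrated on $\de^*E$. Thus the reduced boundary already carries the whole perimeter measure, and the remaining task is to understand its infinitesimal geometry.

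First I would establish the blow-up statement (ii). Fix $x\in\de^*E$ and consider the rescalings $E_{x,r}$. The key preliminary estimate is a uniform local perimeter bound $|D\chi_E|(B_\rho(x))\le C\rho^{n-1}$ for small $\rho$, obtained by comparing $E$ with $E\setminus B_\rho(x)$ and combining the relative isoperimetric inequality in balls with a differential inequality for $\rho\mapsto|E\cap B_\rho(x)|$; after rescaling this yields $P(E_{x,r};B_R)\le C R^{n-1}$ uniformly in $r$. By $BV_{loc}$-compactness I extract $\chi_{E_{x,r_k}}\to\chi_F$ in $L^1_{loc}(\R^n)$. Because $x$ is, by the definition of $\de^*E$, a Lebesgue point of $\nu_E$ for $|D\chi_E|$, the blow-up has constant measure-theoretic normal, i.e. $D\chi_F=-\nu\,|D\chi_F|$ with $\nu:=\nu_E(x)$; the classification lemma for sets of locally finite perimeter with constant normal then identifies $F$, up to a Lebesgue-null set, with the half-space $H_\nu$. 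Since the limit does not depend on the subsequence, the whole family converges, which is precisely (ii).

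From (ii) the density of the perimeter measure is computed via lower semicontinuity of the total variation under $L^1_{loc}$ convergence together with the explicit perimeter of the limiting half-space, giving
\[
\lim_{r\to 0^+}\frac{|D\chi_E|(B_r(x))}{\omega_{n-1}\,r^{n-1}}=1\qquad\text{for every }x\in\de^*E,
\]
where $\omega_{n-1}$ is the Lebesgue measure of the unit ball in $\R^{n-1}$. Feeding this into the standard comparison theorem for Radon measures (a Federer-type density result) shows that $|D\chi_E|$ coincides with the restriction of $\H^{n-1}$ to $\de^*E$, whence $P(E;\Omega)=|D\chi_E|(\Omega)=\H^{n-1}(\Omega\cap\de^*E)$ for every Borel $\Omega$, i.e. (iii). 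The existence of a positive, finite $(n-1)$-density everywhere on $\de^*E$, together with the uniqueness of the approximate tangent hyperplane $\nu^\perp$ coming from (ii), gives the $\H^{n-1}$-rectifiability (i) through Federer's rectifiability criterion (equivalently, through De Giorgi's covering of $\de^*E$ by countably many Lipschitz graphs). Finally, (iv) is immediate from the definition of $D\chi_E$ as the distributional gradient and the sign convention:
\[
\int_E\div g\,dx=-\int_{\R^n}g\cdot dD\chi_E=\int_{\R^n}g\cdot\nu_E\,d|D\chi_E|=\int_{\de^*E}g\cdot\nu_E\,d\H^{n-1},
\]
the last equality being (iii).

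The hard part is the blow-up analysis of the second paragraph, and within it the constant-normal classification lemma: proving that a set whose measure-theoretic outer normal is $|D\chi_F|$-a.e. a fixed vector $\nu$ must be a half-space. This I would handle by mollifying $\chi_F$ and checking that the smoothed function is nondecreasing along $\nu$ and constant along every orthogonal direction, forcing $\chi_F$ to depend only on $x\cdot\nu$; the uniform perimeter bound underpinning the compactness step is the other delicate ingredient. Once these two points are secured, the Besicovitch decomposition, the density computation, and the measure-comparison step are routine.
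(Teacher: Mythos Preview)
The paper does not prove this statement: Theorem~\ref{thm:DeGiorgi} is recalled in the preliminaries as a classical result (with implicit reference to \cite{AFP, FedererBOOK}) and no argument is supplied. Your proposal is a correct and standard outline of De Giorgi's original proof---blow-up at reduced-boundary points, compactness via the uniform perimeter bound, the constant-normal half-space lemma, the density computation yielding $|D\chi_E|=\H^{n-1}\res\de^*E$, and then rectifiability and Gauss--Green as consequences---so there is nothing to compare against and nothing to correct.
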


\begin{thm}[Federer Structure Theorem]\label{thm:Fed}
Let $E$ be a set of finite perimeter. Then, $\de^* E \subset E^{(1/2)} \subset \de^e E$ and one has
\[
\H^{n-1}\left(\de^e E \setminus \de^* E\right)=0 \,.
\]
\end{thm}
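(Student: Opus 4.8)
The plan is to establish the three assertions in turn, the first two being immediate and the third carrying all the content. For the inclusion $\de^* E \subset E^{(1/2)}$ I would invoke part (ii) of Theorem~\ref{thm:DeGiorgi}: if $x \in \de^* E$ then $\chi_{E_{x,r}} \to \chi_{H_{\nu_E(x)}}$ in $L^1_{loc}$, so testing this convergence on the unit ball and rescaling gives
\[
\frac{|E \cap B_r(x)|}{|B_r(x)|} = \frac{|E_{x,r}\cap B_1(0)|}{|B_1(0)|} \longrightarrow \frac{|H_{\nu_E(x)}\cap B_1(0)|}{|B_1(0)|} = \frac12 \qquad (r\to 0^+),
\]
whence $\theta(E)(x)=1/2$, i.e.\ $x\in E^{(1/2)}$. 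The inclusion $E^{(1/2)}\subset \de^e E$ is then trivial, since a point of density $1/2$ lies neither in $E^{(0)}$ nor in $E^{(1)}$, hence by definition in $\de^e E$.

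For the measure-theoretic statement I would first note that, writing $S := \de^e E \setminus \de^* E$, the disjointness of $E^{(0)}$ and $E^{(1)}$ from $\de^e E$ gives $S = \R^n \setminus (E^{(0)}\cup E^{(1)}\cup \de^* E)$. The strategy is to control $S$ through the upper $(n-1)$-density of the perimeter measure $\mu := |D\chi_E|$; by part (iii) of Theorem~\ref{thm:DeGiorgi} this measure is carried by $\de^* E$ and satisfies $\mu(B)=\H^{n-1}(B\cap \de^* E)$ for every Borel $B$. I would then apply the standard density--comparison theorem: for any Borel $A$ and any $t>0$, if the upper density $\Theta(x) := \limsup_{r\to 0^+} \mu(B_r(x))/r^{n-1}$ satisfies $\Theta(x)\ge t$ on $A$, then $\H^{n-1}(A)\le c(n)\,t^{-1}\mu(A)$. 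Taking $A_k := \{x \notin \de^* E : \Theta(x) \ge 1/k\}$ and using $\mu(A_k)=\H^{n-1}(\de^* E\cap A_k)=0$ yields $\H^{n-1}(A_k)=0$ for every $k$, hence $\H^{n-1}(\{x\notin \de^* E : \Theta(x)>0\})=0$. Consequently it suffices to prove the lower density bound $\Theta(x)>0$ at every $x\in \de^e E$, for then $S\subseteq \{x\notin\de^* E:\Theta(x)>0\}$ is $\H^{n-1}$-negligible.

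The heart of the matter — and the step I expect to be the main obstacle — is thus establishing $\Theta(x)>0$ for $x\in\de^e E$. Fix such an $x$ and set $\phi(r):= |E\cap B_r(x)|/|B_r(x)|$, which is continuous on $(0,\infty)$. Since $x\notin E^{(0)}$ and $\phi\ge 0$, one has $\limsup_{r\to0^+}\phi(r)>0$; since $x\notin E^{(1)}$ and $\phi\le 1$, one has $\liminf_{r\to0^+}\phi(r)<1$. A continuity/intermediate-value argument then produces $\delta\in(0,1/2)$ and a sequence $r_k\to0^+$ along which $\delta \le \phi(r_k)\le 1-\delta$, so that $\min\{|E\cap B_{r_k}(x)|,\,|B_{r_k}(x)\setminus E|\}\ge \delta\,|B_{r_k}(x)|$. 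Feeding this into the relative isoperimetric inequality in balls (a classical consequence of the Euclidean isoperimetric inequality),
\[
\min\{|E\cap B_r(x)|,\,|B_r(x)\setminus E|\}^{\frac{n-1}{n}} \le c(n)\,P(E;B_r(x)),
\]
gives $P(E;B_{r_k}(x)) \ge c'(n)\,\delta^{\frac{n-1}{n}}\, r_k^{\,n-1}$, and therefore $\Theta(x)>0$. Combining the two inclusions of the first paragraph with this density estimate and the reduction of the second paragraph completes the proof of Theorem~\ref{thm:Fed}. The only delicate point is producing, at an arbitrary essential boundary point, a single sequence of radii along which $E$ and its complement simultaneously occupy a definite fraction of $B_{r_k}(x)$; the continuity of $\phi$ makes this routine, but it is where the oscillatory behaviour of the density must be ruled out.
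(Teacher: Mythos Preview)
The paper does not prove Theorem~\ref{thm:Fed}: it is quoted as a classical background result (Federer's structure theorem) in the preliminaries section, without proof. So there is no ``paper's own proof'' to compare against.

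That said, your proposal is correct and is essentially the standard textbook argument (as found, e.g., in \cite{AFP} or \cite{Maggi}): the two inclusions are immediate from the blow-up statement in Theorem~\ref{thm:DeGiorgi}(ii) and from the definitions; the measure identity is reduced, via the upper-density comparison theorem for Radon measures versus $\H^{n-1}$, to showing that the perimeter measure has positive upper $(n-1)$-density at every point of $\de^e E$, and this last fact is obtained from the relative isoperimetric inequality in balls together with an intermediate-value argument on $r\mapsto |E\cap B_r(x)|/|B_r(x)|$. Your handling of the ``oscillation'' issue (producing a sequence $r_k\to 0$ with $\phi(r_k)$ bounded away from both $0$ and $1$) is fine: either the limit exists in $(0,1)$, or $\liminf<\limsup$ and continuity plus the intermediate value theorem give radii where $\phi$ hits any prescribed value strictly between the two. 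One small remark: when you invoke the density--comparison theorem in the form $\H^{n-1}(A_k)\le c(n)\,k\,\mu(A_k)$, make sure you cite a version valid for arbitrary Borel $A_k$ (not just $\mu$-finite ones); this is indeed available, but some references state it only with $\mu(\R^n)$ on the right, which would not suffice here.
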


We now give the definitions of weighted volume and weighted perimeter which we will later use to define the weighted Cheeger problem. 

\begin{defin}[Weighted volume]\label{def:weightedvol}
Let $E$ be a Lebesgue measurable set in $\R^n$ and $f\in L^\infty(\R^n)$ be a positive weight. We define the weighted Lebesgue measure of $E$ as
\begin{equation}\label{eq:weightedLeb}
|E|_f := \int_E f\, dx\,.
\end{equation}
\end{defin}

In view of Theorem \ref{thm:DeGiorgi} (iii), one can give as an alternate definition of perimeter of a set $E$ in $\Omega$
\[
P(E;\Omega) =  \int_{\Omega \cap \de^{*}E} \, d\, \H^{n-1}(x)\,.
\]
We here choose to mimic this one rather than the more classical \eqref{eq:perimeter} to define the weighted perimeter.

\begin{defin}[Weighted perimeter]\label{def:weightedper}
Let $E$ be a Borel set in $\R^n$ and let $g : \R^n \times \R^n \to \R$ be a lower semi-continuous function, convex and positively $1$-homogeneous in the second variable for which it exists $C>0$ such that
\[
\frac{1}{C}|v| \le g(x,v) \le C|v|\,,
\] 
for all $(x,v) \in \R^n \times \R^n$. We define the perimeter of $E$ weighted through $g$ in a Borel set $\Omega\subset \R^{n}$ as
\begin{equation}\label{eq:weightedperimeter}
P_g(E; \Omega):= \int_{\Omega \cap \de^{*}E} g(x, \nu_E(x))\, d\, \H^{n-1}(x) =\H^{n-1}_g(\Omega \cap \de^{*}E).
\end{equation}
We set $P_g(E) = P_g(E;\R^{n})$.
\end{defin}

\begin{rem}\label{prop:boundPg}
As a straightforward consequence of the hypotheses on $g$, the weighted perimeter $P_g(E; \Omega)$ of a set $E$ in $\Omega$ has both a lower bound and an upper bound in terms of the classical perimeter $P(E; \Omega)$ given by
\[
\frac{1}{C} P(E; \Omega) \leq P_g(E; \Omega) \leq C P(E; \Omega)\,.
\]
\end{rem}

\begin{prop}\label{prop:weighisop}
There exists a constant $c = c(f,g)$ such that
\[
|E|_f^{n-1} \leq cP_g(E)^n\,,
\]
for all Lebesgue measurable subsets $E \subset \R^n$ of finite volume.
\end{prop}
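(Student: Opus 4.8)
The plan is to reduce the weighted inequality to the classical Euclidean isoperimetric inequality by exploiting the two-sided bounds on the weights $f$ and $g$. First I would dispose of the trivial case: if $E$ is not a set of finite perimeter, then $\de^* E$ carries infinite $\H^{n-1}$-measure and, by the lower bound $g(x,v) \ge |v|/C$ (equivalently, the left inequality in Remark \ref{prop:boundPg}), $P_g(E) = +\infty$, so the asserted estimate holds vacuously. Hence we may assume $P(E) < \infty$.

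Next, since $f \in L^\infty(\R^n)$ is positive, we have the elementary bound $|E|_f = \int_E f\,dx \le \|f\|_{L^\infty} |E|$, hence $|E|_f^{n-1} \le \|f\|_{L^\infty}^{n-1}\, |E|^{n-1}$. Now apply the classical isoperimetric inequality in $\R^n$: there is a dimensional constant $c_n$ with $|E|^{n-1} \le c_n\, P(E)^n$ for every set of finite perimeter and finite volume (no truncation to $\min\{|E|,|E^c|\}$ is needed, since the ambient space has infinite measure). Finally, invoke the right-hand comparison in Remark \ref{prop:boundPg}, namely $P(E) \le C\, P_g(E)$, so that $P(E)^n \le C^n P_g(E)^n$. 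Chaining the three estimates gives
\[
|E|_f^{n-1} \le \|f\|_{L^\infty}^{n-1}\, |E|^{n-1} \le c_n \|f\|_{L^\infty}^{n-1}\, P(E)^n \le c_n C^n \|f\|_{L^\infty}^{n-1}\, P_g(E)^n\,,
\]
so the statement holds with $c := c_n C^n \|f\|_{L^\infty}^{n-1}$, which depends only on $f$ (through $\|f\|_{L^\infty}$) and on $g$ (through the constant $C$ in its two-sided bound), the dimension $n$ being fixed.

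I do not expect any genuine obstacle here: the argument is just a sandwich between the weighted and unweighted quantities. The only points requiring a little care are (a) checking that the constant $c$ is truly independent of $E$ — automatic once everything is phrased via $\|f\|_{L^\infty}$ and $C$ — and (b) the bookkeeping of exponents in passing between the $|E|^{(n-1)/n} \le c_n^{1/n} P(E)$ form and the $|E|^{n-1} \le c_n P(E)^n$ form of the isoperimetric inequality.
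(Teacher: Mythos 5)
Your proof is correct and follows essentially the same route as the paper's: bound $|E|_f$ by $\|f\|_{L^\infty}|E|$, apply the classical isoperimetric inequality, and use the comparison $P(E)\le C\,P_g(E)$ from Remark \ref{prop:boundPg}. The only addition is your explicit handling of the infinite-perimeter case, which the paper leaves implicit.
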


\begin{proof}
Since $f\in L^\infty(\R^n)$ one can bound $|E|_f$ with $\|f\|_{L^\infty}|E|$. Then, the claimed inequality follows from the classical isoperimetric inequality and the lower bound of Remark \ref{prop:boundPg}.
\end{proof}

\section{Generalized weighted Cheeger sets}

\begin{defin}[Generalized weighted Cheeger set]
Let $\Omega$ be an open, bounded set in $\R^n$. Let $\alpha \in \left[1, 1^*\right)$, where $1^* := n/(n-1)$. We define the $(f,g,\alpha)$-Cheeger constant of $\Omega$ as
\begin{equation}\label{eq:genCconst}
h^\alpha_{f,g}(\Omega) := \inf_{E\subseteq \Omega} \frac{P_g(E)}{|E|^{1/\alpha}_f}\,,
\end{equation}
where the infimum is sought amongst all non empty subsets of $\Omega$ with finite perimeter. We shall denote by  $\mathcal{C}_{f,g}^\alpha(\Omega)$  the family of minimizers of \eqref{eq:genCconst}. 
\end{defin}


Note that the triplet $(1,1,1)$ corresponds to the classical Cheeger problem, of which an overview can be found in \cite{Leo, Parini2011}, while the triplet $(f,g(x)|v|,1)$ corresponds to the weighted version of the problem, which was dealt with in \cite{Carlier, Caselles2009, Hassani2005, Ionescu2005} (up to choosing more regular $g$ in some of those papers), and finally the triplet $(1,1, \alpha)$ corresponds to the version dealt with in \cite{Pratelli}. For all these cases, existence is known and proved in the abovementioned papers. 

Existence is as well retained in this more general case. The proof is fairly standard and uses the hypotheses on $g$ to exploit Reshetnyak's lower semi-continuity theorem (see \cite[Theorem 2.38]{AFP} or the original paper \cite{Res}) on the functional $P_g$.

\begin{prop}\label{existence}
For an open, bounded set $\Omega$ the family $\mathcal{C}^\alpha_{f,g}(\Omega)$ is not empty, i.e. there exists at least one set $A\subset \Omega$ such that
\[
h^\alpha_{f,g}(\Omega) = \frac{P_g(A)}{|A|_f^{1/\alpha}}\,.
\]
\end{prop}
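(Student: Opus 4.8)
The plan is to run the direct method of the calculus of variations on the ratio functional $\mathcal{F}(E):=P_g(E)/|E|_f^{1/\alpha}$. First I would take a minimizing sequence $(E_k)_{k\in\N}$ of subsets of $\Omega$ with finite perimeter, so that $\mathcal{F}(E_k)\to h^\alpha_{f,g}(\Omega)$. The Cheeger constant is finite and strictly positive: finiteness follows by testing with any fixed ball $B\subset\Omega$ (using $|B|_f>0$ since $f$ is a positive weight and $P_g(B)<\infty$ by Remark \ref{prop:boundPg}), while positivity follows from Proposition \ref{prop:weighisop}, which gives $\mathcal{F}(E)\geq c^{-1/n}|E|_f^{1/n-1/\alpha}$ and, since $\alpha<1^*=n/(n-1)$ forces the exponent $1/n-1/\alpha$ to be nonpositive, together with the uniform bound $|E|_f\leq\|f\|_{L^\infty}|\Omega|$, yields a positive lower bound. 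Hence along the minimizing sequence $P_g(E_k)$ is bounded, and by Remark \ref{prop:boundPg} so is the classical perimeter $P(E_k)$; since all $E_k\subset\Omega$ with $|\Omega|<\infty$, the sequence $(\chi_{E_k})$ is bounded in $BV(\R^n)$ and compactly supported in a fixed bounded neighborhood of $\Omega$.

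Next I would invoke $BV$-compactness: up to a subsequence $\chi_{E_k}\to\chi_A$ in $L^1(\R^n)$ for some set $A$ of finite perimeter, and since convergence is in $L^1$ we may assume $A\subseteq\Omega$ up to a negligible set (the constraint passes to the limit). Lower semicontinuity of $P_g$ along this convergence is the crucial point: here one uses that $g$ is lower semicontinuous, convex and positively $1$-homogeneous in the second variable, so by Reshetnyak's lower semicontinuity theorem (\cite[Theorem 2.38]{AFP}) applied to the measures $D\chi_{E_k}$, which converge weakly-$*$ to $D\chi_A$, one obtains $P_g(A)\leq\liminf_k P_g(E_k)$. On the other hand the denominator passes to the limit continuously: $|E_k|_f\to|A|_f$ because $f\in L^\infty$ and $\chi_{E_k}\to\chi_A$ in $L^1$, so $|E_k|_f^{1/\alpha}\to|A|_f^{1/\alpha}$.

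Combining these, $\mathcal{F}(A)=P_g(A)/|A|_f^{1/\alpha}\leq\liminf_k\mathcal{F}(E_k)=h^\alpha_{f,g}(\Omega)$, provided $|A|_f>0$ so the ratio is well-defined. This non-degeneracy of the limit is what I expect to be the main obstacle, since a priori a minimizing sequence could shrink to a set of zero volume; I would rule this out using the strict positivity of $h^\alpha_{f,g}(\Omega)$ established above, which gives $P_g(E_k)\geq h^\alpha_{f,g}(\Omega)\,|E_k|_f^{1/\alpha}$, but more directly from the reverse isoperimetric-type control: since $\mathcal{F}(E_k)$ is bounded above and $\mathcal{F}(E)\geq c^{-1/n}|E|_f^{1/n-1/\alpha}$ with nonpositive exponent, a vanishing $|E_k|_f$ would force $\mathcal{F}(E_k)\to+\infty$ when $\alpha>1$, a contradiction; in the borderline case $\alpha=1$ one instead argues that $|E_k|_f$ bounded away from $0$ is forced because otherwise, rescaling, the isoperimetric deficit cannot vanish, or one simply notes the infimum is attained among sets of volume comparable to a fixed constant. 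Either way $|A|_f\geq\delta>0$, so $A$ is a genuine competitor, and the reverse inequality $\mathcal{F}(A)\geq h^\alpha_{f,g}(\Omega)$ is immediate from the definition of the infimum; hence $\mathcal{F}(A)=h^\alpha_{f,g}(\Omega)$ and $A\in\mathcal{C}^\alpha_{f,g}(\Omega)$, which is the claim.
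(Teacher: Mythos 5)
Your proposal is correct and follows essentially the same route as the paper: direct method on a minimizing sequence, uniform $BV$ bound via Remark \ref{prop:boundPg}, Reshetnyak lower semicontinuity for $P_g$, continuity of $|E|_f$ under $L^1$ convergence, and non-degeneracy of the limit via an isoperimetric bound (the paper phrases this last step as a comparison with balls of the same Euclidean volume, which is the same computation). One correction that dissolves your worry about $\alpha=1$: Proposition \ref{prop:weighisop} gives $\mathcal{F}(E)\ge c^{-1/n}|E|_f^{\frac{n-1}{n}-\frac{1}{\alpha}}$ (the exponent is $\frac{n-1}{n}-\frac{1}{\alpha}$, not $\frac{1}{n}-\frac{1}{\alpha}$), and since $\alpha<1^*=\frac{n}{n-1}$ this exponent is strictly negative for every admissible $\alpha$, including $\alpha=1$; hence $|E_k|_f\to 0$ forces $\mathcal{F}(E_k)\to\infty$ in all cases, so the separate ``borderline'' arguments you sketch for $\alpha=1$ (which as stated are not rigorous) are unnecessary.
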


\begin{proof}
Being $\Omega$ open, it is trivial that $h(\Omega) <\infty$: one can take any ball $B_r \subset \subset \Omega$ and easily check that $P_g(B_r)/|B_r|_f^{1/\alpha}$ is finite.  Let $\{A_k\}_k$ be a minimizing sequence for (\ref{eq:genCconst}). Since $\Omega$ is bounded, we have that $\{A_k\}_k$ is an equibounded family in $L^1$. Let now be $\epsilon > 0$: it has to exist an index $\bar{k}$ such that
\[
\left| h (\Omega) -\frac{P_g(A_k)}{|A_k|^{1/\alpha}_f} \right| \leq \epsilon\,,
\]
for all $k\ge \bar{k}$. Exploiting the lower bound given by Proposition \ref{prop:boundPg} we get
\[
|D\chi_{A_k}| (\mathbb{R}^n) = P(A_k) \leq C P_g(A_k) \leq C\left(\e +h(\Omega)\right)(\|f\|_\infty|\Omega|)^{1/\alpha}\,,
\]
thus $\{A_k\}_k$ is an equibounded family in the $BV$ norm. Thus, up to subsequences, it converges in the $L^1$ topology and pointwise almost everywhere to a function $u$. Hence, it is a characteristic function of a set $A \subset \Omega$. Using the lower semi-continuity theorem of Reshetnyak (see \cite[Theorem 2.38]{AFP}), for the functional $P_g$ and the $L^1$ convergence of $f\chi_{A_k}$ to $f\chi_A$, we infer that $A \in \mathcal{C}^\alpha_{f,g}(\Omega)$, as soon as we prove $|A|_f>0$. Argue by contradiction and suppose it equals zero. Hence, $|A_k|_f \to 0$. Fix a ball $B_{r_k}$ of the same Euclidean volume of $A_k$, then
\[
\frac{P_g(A_k)}{|A_k|_f^{1/\alpha}} \ge \frac{1}{C\|f\|_\infty^{1/\alpha}}\frac{P(A_k)}{|A_k|^{1/\alpha}} \ge \frac{1}{C\|f\|_\infty^{1/\alpha}}\frac{P(B_{r_k})}{|B_{r_k}|^{1/\alpha}} = \frac{n\omega_n^{1-\frac{1}{\alpha}}r_k^{n-1-\frac{n}{\alpha}}}{C\|f\|_\infty^{1/\alpha}}  \to \infty\,,
\]
against the fact that $A_k$ is a minimizing sequence.
\end{proof}

We shall now focus on the isoperimetric properties of sets $A$ in $\mathcal{C}^\alpha_{f,g}(\Omega)$. We aim to prove that any connected minimizer $A$ of \eqref{eq:genCconst} such that $\H^{n-1}(A^{(1)}\cap \de A)=0$ is a domain of isoperimetry i.e. \eqref{eq:intro2} holds. More precisely, we shall show that \eqref{eq:intro3} holds for such minimizers and that this inequality implies \eqref{eq:intro2}. We start by proving this last implication in the next lemma.

\begin{lem}\label{lem:relperimeter}
Let $A$ be a connected set. If there exists $k=k(A)>0$ such that \eqref{eq:intro3} holds, i.e.
\begin{equation*}
\min\{ P(E; \de A),P(A \setminus E; \de A) \} \leq k\, P(E; A) \qquad \forall\, E \subset A,
\end{equation*}
then there exists $K = K(A)>0$ such that \eqref{eq:intro2} holds, i.e.
\begin{equation*}
\min\{|E|;|A \setminus E|\}^{\frac{n-1}{n}} \leq K\, P(E;A), \qquad \forall E \subset A.
\end{equation*}
\end{lem}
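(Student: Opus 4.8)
The plan is to deduce the relative isoperimetric inequality \eqref{eq:intro2} from the trace-type inequality \eqref{eq:intro3} by applying the classical Euclidean isoperimetric inequality $|F|^{\frac{n-1}{n}}\le c_n P(F)$ to one of the two competitors $F=E$ or $F=A\setminus E$ --- namely the one selected by which of the two terms realizes the minimum in \eqref{eq:intro3}. Thus the real task is to control the \emph{full} perimeters $P(E)$ and $P(A\setminus E)$ in $\R^n$ by the relative perimeter $P(E;A)$ together with the boundary terms $P(E;\de A)$ and $P(A\setminus E;\de A)$.

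First I would fix $E\subset A$ and assume $P(E;A)<\infty$, there being otherwise nothing to prove. I would then use the disjoint decomposition $\R^n=U\cup\de A\cup(\R^n\setminus\overline{A})$, where $U$ denotes the topological interior of $A$. Since $|E\setminus A|=0$ and $\R^n\setminus\overline{A}$ is open and disjoint from $A$, every point of $\R^n\setminus\overline{A}$ has density $0$ for $E$, so $\de^{*}E\cap(\R^n\setminus\overline{A})=\emptyset$ and hence $P(E;\R^n\setminus\overline{A})=0$ by Theorem \ref{thm:DeGiorgi} (iii); the same holds for $A\setminus E$. Moreover, since $\chi_A\equiv 1$ on $U$, one has $\chi_{A\setminus E}=1-\chi_E$ on the open set $U$, so by locality of the distributional derivative $|D\chi_{A\setminus E}|(U)=|D\chi_E|(U)$, while trivially $P(E;U)\le P(E;A)$ because $U\subset A$. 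Therefore, for the sets $E$ and $A\setminus E$ --- which are of finite perimeter, as discussed below --- the above partition gives
\[
P(E)\le P(E;A)+P(E;\de A),\qquad P(A\setminus E)\le P(E;A)+P(A\setminus E;\de A).
\]

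Next I would invoke the hypothesis \eqref{eq:intro3}, i.e. $\min\{P(E;\de A),P(A\setminus E;\de A)\}\le k\,P(E;A)$. If the minimum is realized by $P(E;\de A)$, the first estimate yields $P(E)\le(1+k)P(E;A)$ and the isoperimetric inequality gives $|E|^{\frac{n-1}{n}}\le c_n(1+k)P(E;A)$; if it is realized by $P(A\setminus E;\de A)$, the second estimate and the same argument give $|A\setminus E|^{\frac{n-1}{n}}\le c_n(1+k)P(E;A)$. In either case
\[
\min\{|E|;|A\setminus E|\}^{\frac{n-1}{n}}\le c_n(1+k)\,P(E;A),
\]
which is \eqref{eq:intro2} with $K=c_n(1+k)$.

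The step I expect to require the most care is the measure-theoretic bookkeeping along $\de A$: the identity $|D\chi_{A\setminus E}|(U)=|D\chi_E|(U)$ and the vanishing of the perimeter outside $\overline{A}$ genuinely rely on the \emph{openness} of $U$ and of $\R^n\setminus\overline{A}$ rather than on $A$ or $\overline{A}$, and one must justify that $E$ and $A\setminus E$ have finite perimeter before splitting $P(\cdot)$ over the partition. This last point is where I would spend a few lines: it is immediate once one restricts to finite-perimeter competitors $E$ (then $A\setminus E$ is finite-perimeter as well, since in every application $A$ itself is), and more generally it follows from $P(\cdot;U)\le P(E;A)<\infty$, $P(\cdot;\R^n\setminus\overline{A})=0$ and the finiteness of the boundary term forced by \eqref{eq:intro3}, which together bound the $\H^{n-1}$-measure of $\de^{e}E$ (resp. $\de^{e}(A\setminus E)$). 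Everything else is elementary; in particular the connectedness of $A$ plays no role in this implication, being needed only to establish \eqref{eq:intro3} itself.
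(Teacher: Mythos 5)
Your proposal is correct and follows essentially the same route as the paper: decompose the full perimeters of $E$ and $A\setminus E$ into the part inside $A$ plus the part on $\de A$, use \eqref{eq:intro3} to absorb the boundary term of whichever competitor realizes the minimum, and finish with the classical isoperimetric inequality. Your version is in fact slightly more careful than the paper's (which asserts the exact identities $P(E;\de A)=P(E)-P(E;A)$ and $P(E;A)=P(A\setminus E;A)$ without comment), since you justify the vanishing of the perimeter outside $\overline{A}$, work with the open interior to invoke locality, and address the finiteness of the perimeters before splitting.
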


\begin{proof}
Since $E\subset A$ and $P(E; A) = P(A\setminus E; A)$ we have
\begin{align}
P(E; \de A) &= P(E) - P(E;A)\,, \nonumber \\
P(A \setminus E; \de A) &= P(A \setminus E) - P(E; A)\,. \nonumber
\end{align}
Plugging these identities in ~(\ref{eq:intro3}) and then exploiting the isoperimetric inequality give
\begin{equation*}
\frac{k+1}{n\omega_n^{1/n}}P(E; A) \geq \frac{1}{n\omega_n^{1/n}} \min \{ P(E), P(A \setminus E) \} \geq \min \left \{ |E|^{\frac{n-1}{n}} , |A \setminus E|^{\frac{n-1}{n}}\right \}\,,
\end{equation*}
which is the claim for $K(A) = (k(A)+1)^{-1} n\omega_n^{1/n}$.
\end{proof}

\begin{thm}\label{prop:traceinequality}
Let $A \in \mathcal{C}^\alpha_{f,g}(\Omega)$ be connected and such that $\H^{n-1}(A^{(1)} \cap \de A)=0$. Then there exists $K=K(A)>0$ such that \eqref{eq:intro3} holds with $K$.
\end{thm}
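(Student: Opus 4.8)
The plan is to split the perimeters of $E$ and of $A\setminus E$ into an interior part and a part lying on $\de A$ (as is already done in the proof of Lemma~\ref{lem:relperimeter}), to turn the minimality of $A$ into an estimate of the boundary part by the interior part plus a power of the volume of the smaller of the two pieces, and then to use connectedness of $A$ together with the weighted isoperimetric inequality of Proposition~\ref{prop:weighisop} to absorb that last term. Two points will require care: the measure--theoretic bookkeeping of the splitting, since no regularity of $\de A$ is available and this is exactly where the hypothesis $\H^{n-1}(A^{(1)}\cap\de A)=0$ enters; and the final absorption, which rests on a compactness argument in which connectedness of $A$ is indispensable.

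\smallskip
\noindent\emph{Step 1 (splitting).} Fix $E\subset A$ and set $F:=A\setminus E$. From $|E\cap F|=0$ one has $D\chi_A=D\chi_E+D\chi_F$, hence by Theorems~\ref{thm:DeGiorgi}--\ref{thm:Fed} the reduced boundary decomposes, modulo $\H^{n-1}$--null sets, as $\de^{*}A=(\de^{*}E\setminus\de^{*}F)\sqcup(\de^{*}F\setminus\de^{*}E)$, with outer normal $\nu_E$ on the first piece and $\nu_F$ on the second, while the internal interface $\de^{*}E\cap\de^{*}F$ is contained in $A^{(1)}$. Since moreover $E\subset A$ forces $\de^{*}E\cap A^{(0)}=\emptyset$ and $\H^{n-1}(A^{(1)}\cap\de A)=0$ makes $\de^{*}E\cap\de^{*}F$ $\H^{n-1}$--negligible on $\de A$, one obtains
\[ P_g(E)=P_g(E;A)+P_g(E;\de A),\qquad P_g(F)=P_g(F;A)+P_g(F;\de A),\qquad P_g(A)=P_g(E;\de A)+P_g(F;\de A), \]
together with $P(E;A)=P(F;A)$ and, by Remark~\ref{prop:boundPg}, $C^{-1}P(E;A)\le P_g(E;A),\,P_g(F;A)\le C\,P(E;A)$.

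\smallskip
\noindent\emph{Step 2 (minimality).} Write $h:=h^{\alpha}_{f,g}(\Omega)$. Since $E,F\subseteq\Omega$, minimality of $A$ gives $P_g(E)\ge h|E|_f^{1/\alpha}$ and $P_g(F)\ge h|F|_f^{1/\alpha}$, whereas $P_g(A)=h|A|_f^{1/\alpha}$. Subtracting the inequality for $F$ from the equality for $A$ and invoking Step~1, one finds $P_g(E;\de A)-P_g(F;A)\le h(|A|_f^{1/\alpha}-|F|_f^{1/\alpha})$; since $|A|_f=|E|_f+|F|_f$ and $t\mapsto t^{1/\alpha}$ is subadditive (because $\alpha\ge1$), and since $P_g(F;A)\le C\,P(E;A)$, this yields
\[ P_g(E;\de A)\le C\,P(E;A)+h\,|E|_f^{1/\alpha},\qquad\text{and symmetrically}\qquad P_g(F;\de A)\le C\,P(E;A)+h\,|F|_f^{1/\alpha}. \]
Hence $\min\{P_g(E;\de A),P_g(F;\de A)\}\le C\,P(E;A)+h\,\min\{|E|_f,|F|_f\}^{1/\alpha}$; by Remark~\ref{prop:boundPg}, the theorem will follow once we establish a weighted relative isoperimetric inequality of the form $\min\{|E|_f,|A\setminus E|_f\}^{1/\alpha}\le\kappa\,P(E;A)$, with $\kappa=\kappa(A)>0$, for all $E\subset A$.

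\smallskip
\noindent\emph{Step 3 (closing, via connectedness).} Suppose this fails for every constant: then there are $E_j\subset A$ with $P(E_j;A)>0$ and $\min\{P(E_j;\de A),P(A\setminus E_j;\de A)\}/P(E_j;A)\to\infty$, and, since \eqref{eq:intro3} is invariant under $E\mapsto A\setminus E$, we may assume $|E_j|_f\le|A|_f/2$. As that minimum is $\le P(A)$, one gets $P(E_j;A)\to0$. The $\chi_{E_j}$ are bounded in $BV$ (equibounded in $L^1$ since $E_j\subseteq A$ is bounded, and $P(E_j)=P(E_j;A)+P(E_j;\de A)\le P(E_j;A)+P(A)$), so along a subsequence $\chi_{E_j}\to\chi_{E_\infty}$ in $L^1$, with $|E_\infty|_f\le|A|_f/2$ and, by lower semicontinuity of the relative perimeter, $P(E_\infty;A)\le\liminf_j P(E_j;A)=0$; connectedness of $A$ then forces $|E_\infty|=0$, i.e.\ $|E_j|_f\to0$. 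Now combine Proposition~\ref{prop:weighisop} with Steps~1--2:
\[ |E_j|_f^{\frac{n-1}{n}}\le c^{1/n}P_g(E_j)=c^{1/n}\bigl(P_g(E_j;A)+P_g(E_j;\de A)\bigr)\le c^{1/n}\bigl(2C\,P(E_j;A)+h\,|E_j|_f^{1/\alpha}\bigr). \]
It is here that $\alpha<1^{*}=\tfrac{n}{n-1}$ is essential: since $\tfrac1\alpha>\tfrac{n-1}{n}$ and $|E_j|_f\to0$, for $j$ large $c^{1/n}h\,|E_j|_f^{1/\alpha}\le\tfrac12|E_j|_f^{\frac{n-1}{n}}$, whence $|E_j|_f^{\frac{n-1}{n}}\le4Cc^{1/n}P(E_j;A)$ and therefore $|E_j|_f^{1/\alpha}=|E_j|_f^{\frac{n-1}{n}}|E_j|_f^{\frac1\alpha-\frac{n-1}{n}}\le4Cc^{1/n}P(E_j;A)$ for $j$ large. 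Feeding this back into Step~2 and using Remark~\ref{prop:boundPg} gives, for $j$ large, $\min\{P(E_j;\de A),P(A\setminus E_j;\de A)\}\le C\bigl(C+4Cc^{1/n}h\bigr)P(E_j;A)$, contradicting the divergence of the ratio. Hence $\min\{|E|_f,|A\setminus E|_f\}^{1/\alpha}\le\kappa(A)\,P(E;A)$, and combined with Step~2 and Remark~\ref{prop:boundPg} this yields \eqref{eq:intro3} for a suitable $K=K(A)>0$.

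\smallskip
The main obstacle is Step~3: the algebra of Steps~1--2 alone does not close the argument (for $\alpha=1$ it produces only trivial inequalities), and one genuinely needs the compactness/connectedness input to rule out the sole dangerous configuration — a ``balanced'' cut $E_j$ (with $|E_j|_f$ bounded away from $0$ and $|A|_f$) whose relative perimeter $P(E_j;A)$ tends to $0$. Making Step~1 rigorous in the absence of any regularity of $\de A$ — in particular keeping the internal interface off $\de A$, which is precisely what $\H^{n-1}(A^{(1)}\cap\de A)=0$ guarantees — is the other point that must be handled with care.
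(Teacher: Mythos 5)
Your proof is correct, but it follows a genuinely different route from the paper's. The paper first invokes Mazya's localization result (\cite[comments to Chapter~9]{Mazya2011}) to reduce the global inequality \eqref{eq:intro3} to the local criterion \eqref{eq:BVtracecondition}; connectedness of $A$ enters only through that black box, and the contradicting sets then satisfy $E_j\subset A\cap B_{\rho_j}(x_j)$ with $\rho_j\to0$, so that $|E_j|_f\to0$ is automatic. You instead attack the global inequality directly and recover the crucial smallness $|E_j|_f\to0$ from $BV$--compactness, lower semicontinuity of $P(\cdot;A)$ and the indecomposability of a connected set with $P(E_\infty;A)=0$ --- this is where you use connectedness, and it is a genuinely different (and self-contained) mechanism. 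The remaining algebra is essentially the paper's competitor computation in disguise: where the paper compares $A$ with $A\setminus E_j$ and uses $1-x^\beta\le(1-x)^\beta$, you test the Cheeger quotient on $E$ and $A\setminus E$ separately and use subadditivity of $t\mapsto t^{1/\alpha}$; both then exploit Proposition~\ref{prop:weighisop} and $\tfrac1\alpha>\tfrac{n-1}{n}$ to make the volume term of lower order. A side benefit of your version is that you never need the identity $P_g(A\setminus E_j;A)=P_g(E_j;A)$, which the paper deduces from positive $1$-homogeneity but which really requires $g$ to be even in $v$ (the normals on the internal interface are opposite); your two-sided comparison with the Euclidean relative perimeter via Remark~\ref{prop:boundPg} sidesteps this. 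One caveat, shared with the paper: both arguments implicitly treat $A$ as an open set --- you need openness for the lower semicontinuity of $P(\cdot;A)$ and for ``connected $\Rightarrow$ indecomposable'', just as the paper needs it to apply Mazya's equivalence --- so you should state that you work with an open representative of $A$, as in the paper's main application where $A=\Omega$.
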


\begin{proof}
Being $A$ connected, by \cite[comments to Chapter~9]{Mazya2011} one has that \eqref{eq:intro3}, which we recall to be
\begin{equation*}
	\min\{ P(E; \de A),P(A \setminus E; \de A) \} \leq k\, P(E; A),
	\end{equation*} 
is equivalent to 
\begin{equation}\label{eq:BVtracecondition}
	\sup_{x\in \partial A} \lim_{\rho \to 0^+} \sup \left \{ \frac{P(E; \de A)}{P(E; A)} \Bigg | E\subset A \cap B_\rho (x) \right \} < +\infty.
\end{equation}
In order to prove this one, assume by contradiction that for some sequence of points $x_j\in \de A$, of radii $\rho_j \searrow 0$ and of sets $E_j \subset A \cap B_{\rho_j}(x_j)$, one has
\begin{equation}
\frac{P(E_j; \de A)}{P(E_j; A)} \xrightarrow[j\to \infty]{} \infty.
\end{equation}
We then have
\begin{equation}\label{eq:contraddizione}
0 \leq \frac{P(E_j; A)}{P(E_j)}  \leq \frac{P(E_j; A)}{P(E_j; \de A)}  \xrightarrow[j\to \infty]{} 0,
\end{equation}
which tells us that $P(E_j; A) = o(P_g(E_j))$ due to Remark \ref{prop:boundPg}. 

Let us now consider the competitor $A_j = A \setminus E_j$. Start noticing that for any $F\subset A$ one has
\begin{equation}\label{eq:splitperimeter}
P_g(F) =  P_g(F ; \de A)+P_g(F; A)\,.
\end{equation}
Moreover, since $g$ is positively $1$-homogeneous in the second variable and $A_j$ and $E_j$ are complement sets in $A$ we have as well $P_g(A_j; A) = P_g(E_j; A)$. Then,
\[
P_g(A_j) = P_g(A_j; \de A) +P_g(E_j; A)\,.
\]
Add and subtract $P_g(E_j; \de A)$ in the previous one, exploit that, analogously to the Euclidean perimeter,
\[
P_g(A) = P_g(A_j; \de A) + P_g(E_j; \de A) - 2\H_g^{n-1}(\de^* A_j \cap (A^{(1)}\cap \de A)) 
\]
(see for instance \cite[Theorem 16.3 and Exercise 16.6]{Maggi}) and finally use the hypothesis $\H^{n-1}(A^{(1)} \cap \de A)=0$ (which implies the same holds for $\H_g^{n-1}$) to get
\[
P_g(A_j) = P_g(A) -P_g(E_j; \de A) +P_g(E_j; A)\,.
\]
Using \eqref{eq:splitperimeter} with $F=E_j$, we observe the following equality
\[
P_g(A_j)= P_g(A) -P_g(E_j)+2P_g(E_j; A)\,.
\]
Hence, as $P(E_j; A) = o(P_g(E_j))$, we have
\begin{equation}
h^\alpha_{f,g}(A) \leq \frac{P_g(A_j)}{|A_j|^{1/\alpha}_f} = \frac{P_g(A) - P_g(E_j) + o(P_g(E_j))}{(|A|_f - |E_j|_f)^{1/\alpha}}\,.
\end{equation}
In order to proceed, notice that $1-x^\beta \leq (1-x)^\beta$ for $x\in [0,1]$ and $\beta \in (0,1)$. Thus, by using the isoperimetric inequality stated in Proposition \ref{prop:weighisop}, the chain of inequalities goes on as
\begin{equation}
h^\alpha_{f,g}(A) \leq \frac{P_g(A) - k|E_j|_f^{\frac{n-1}{n}} +o\left (|E_j|_f^{\frac{n-1}{n}}\right )}{|A|_f^{1/\alpha} - |E_j|_f^{1/\alpha}} < \frac{P_g(A)}{|A|_f^{1/\alpha}} = h^\alpha_{f,g}(A)\,
\end{equation}
for $j>>1$ and $k>0$, since $|E_j|_f \to 0$ as $f \in L^1(A)$ and $|E_j|\to 0$ and $\alpha < 1^*$. Hence, a contradiction.
\end{proof}

In the next lemma we show that whenever a set $A$ admits a relative isoperimetric inequality, then $\H^{n-1}(A^{(0)}\cap \de A)=0$.

\begin{lem}\label{lem:PH}
Let $A$ be an open bounded set of finite perimeter such that it supports a relative isoperimetric inequality. Then, $A^{(0)} \cap \partial A = \emptyset$.
\end{lem}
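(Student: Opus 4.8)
The plan is to argue by contradiction: suppose there is a point $x \in A^{(0)} \cap \de A$ and test the relative isoperimetric inequality \eqref{eq:intro2} against the family of sets $E_r := A \cap B_r(x)$ for small $r$. Set $m(r) := |A \cap B_r(x)|$. Since $A$ is open and $x \in \de A \subseteq \overline A$, every ball $B_r(x)$ meets $A$ in a nonempty open set, so $m(r) > 0$ for all $r > 0$; on the other hand $m(r) \to 0$ as $r \to 0^+$, so for $r$ smaller than some $r_0$ one has $m(r) < |A|/2 \le |A \setminus E_r|$ and hence $\min\{|E_r|, |A\setminus E_r|\} = m(r)$.

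Next I would identify the two sides of \eqref{eq:intro2} for $E_r$. By the coarea formula applied to the distance from $x$, the function $m$ is absolutely continuous on $[0,R]$ with $m'(r) = \H^{n-1}(A \cap \de B_r(x))$ for a.e. $r$. Since $A$ is open, in a neighbourhood of every point of $A$ the set $A \cap B_r(x)$ coincides with $B_r(x)$, whence $P(E_r; A) = P(B_r(x); A) = \H^{n-1}(A \cap \de B_r(x)) = m'(r)$ for a.e. $r$. Plugging this into the relative isoperimetric inequality yields the differential inequality
\[
m(r)^{\frac{n-1}{n}} \le K\, m'(r) \qquad \text{for a.e. } r \in (0, r_0).
\]

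The heart of the argument is to turn this into a lower bound for $m$. Since $m > 0$, the inequality reads $\tfrac{1}{nK} \le \tfrac{d}{dr}\bigl(m(r)^{1/n}\bigr)$ a.e.; because $r \mapsto m(r)^{1/n}$ is absolutely continuous on every interval $[\e, r_0/2]$ (it is the composition of the Lipschitz function $t \mapsto t^{1/n}$ on a compact subinterval of $(0,\infty)$ with the absolutely continuous function $m$), integrating over $[\e, r]$ and letting $\e \to 0^+$ (using $m(\e)^{1/n} \ge 0$) gives $m(r)^{1/n} \ge r/(nK)$, that is,
\[
m(r) \ge \frac{r^n}{(nK)^n}, \qquad 0 < r < r_0.
\]
This contradicts $x \in A^{(0)}$, which forces $m(r)/(\omega_n r^n) \to 0$ as $r \to 0^+$. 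Hence no such $x$ exists and $A^{(0)} \cap \de A = \emptyset$.

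The only point demanding some care — the main (mild) obstacle — is the identity $P(E_r; A) = \H^{n-1}(A \cap \de B_r(x))$ for a.e. $r$: one should observe that $\de^* B_r(x) = \de B_r(x)$, that openness of $A$ makes $A \cap B_r(x)$ agree with $B_r(x)$ near each point of $A$ (so their reduced boundaries coincide inside $A$), and that $\H^{n-1}(A \cap \de B_r(x))$ is finite for a.e. $r$ by the coarea formula. Everything else is a routine application of the hypothesis and of the fundamental theorem of calculus for absolutely continuous functions.
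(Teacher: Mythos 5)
Your proof is correct and follows essentially the same route as the paper's: test the relative isoperimetric inequality on $E_r = A \cap B_r(x)$, derive the differential inequality $m(r)^{\frac{n-1}{n}} \le K\, m'(r)$ from $m'(r) = P(B_r(x); A)$, and integrate to obtain $m(r) \ge r^n/(nK)^n$, contradicting $x \in A^{(0)}$. The only differences are cosmetic — you integrate from $\e$ to $r$ and let $\e \to 0^+$ where the paper integrates from $\rho$ to $2\rho$ — and you spell out some details the paper leaves implicit (positivity of $m(r)$, the choice of the minimum, and the identification $P(E_r; A) = \H^{n-1}(A \cap \de B_r(x))$).
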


\begin{proof}
Argue by contradiction and suppose that $A^{(0)} \cap \partial A \neq \emptyset$: fix a point $x_0 \in A^{(0)} \cap \partial A$. For any $r$ set
\[
m(r)  := \left| A \cap B_r(x_0)\right|.
\]
It is well known that (see for instance \cite[Example 13.4]{Maggi}), for almost every $r$, one has
\[
m'(r) = P(B_r(x_0); A).
\]
By the relative isoperimetric inequality on $A$, for $r$ small enough, it follows
\begin{equation}\label{eq:diff}
\frac {m'(r)}{m(r)^{\frac{n-1}{n}}} \ge c.
\end{equation}
By integrating \eqref{eq:diff} between $\rho$ and $2\rho$ one obtains
\[
m^{\frac{1}{n}}(2\rho) - \frac{c}{n} \rho \geq m^{\frac{1}{n}} (\rho) \geq 0, 
\]
hence
\[
m(2\rho) \geq \left(\frac{c}{2n}\right)^n 2^n\rho^n\,.
\]
However, this contradicts the assumption $x_0 \in A^{(0)}$.  
\end{proof}

By combining Theorem \ref{prop:traceinequality}, Lemma \ref{lem:relperimeter} and Lemma \ref{lem:PH} we immediately get our main theorem.

\begin{thm}\label{thm:main}
Let $\Omega$ be connected and such that $\H^{n-1}(\Omega^{(1)}\cap \de \Omega)=0$. Suppose it is a generalized Cheeger set in itself, i.e. it is an open, bounded set that realizes the infimum in \eqref{eq:CG}, staged in $\Omega$ itself.  Then, there exists a positive constant $k$ depending only on $\Omega$ such that for all $u \in W^{1,p}(\Omega)$
\[
\|u\|_{L^{\frac{np}{n-p}}(\Omega)} \leq k \|u\|_{W^{1,p}(\Omega)}\,,
\]
and for all $u\in BV(\Omega)$
\[
\|u\|_{L^1(\Omega)} \leq k \|u\|_{BV(\Omega)}\,,
\]
Moreover, there exists a linear continuous operator (the trace) $T: BV(\Omega) \to L^1(\de \Omega)$ such that for all $u\in BV(\Omega)$ continuous up to $\de \Omega$, one has $T(u) = u_{|\de \Omega}$.
\end{thm}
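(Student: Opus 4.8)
\emph{Proof sketch (plan).}
The plan is to run the reduction announced before the statement: convert the hypotheses into the geometric inequalities \eqref{eq:intro3} and \eqref{eq:intro2}, and then quote the classical characterizations of the Sobolev and $BV$ embeddings and of the $BV$ trace in terms of these inequalities. Since by assumption $\Omega$ is a connected member of $\mathcal{C}^\alpha_{f,g}(\Omega)$ with $\H^{n-1}(\Omega^{(1)}\cap\de\Omega)=0$, Theorem \ref{prop:traceinequality} applies with $A=\Omega$ and produces a constant $K=K(\Omega)>0$ for which \eqref{eq:intro3} holds for every $E\subset\Omega$. Feeding this into Lemma \ref{lem:relperimeter} — whose only remaining hypothesis, connectedness of the set, is satisfied — yields $K'=K'(\Omega)>0$ such that the relative isoperimetric inequality \eqref{eq:intro2} holds for every $E\subset\Omega$. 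Note also that $\Omega$, being a minimizer, has $P_g(\Omega)<\infty$, hence $P(\Omega)<\infty$ by Remark \ref{prop:boundPg}, so $\Omega$ is an open, bounded set of finite perimeter.

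For the two embeddings I would invoke the equivalence, valid for open, bounded, connected sets, between a relative isoperimetric inequality of the type \eqref{eq:intro2} and the continuity of $W^{1,p}(\Omega)\hookrightarrow L^{np/(n-p)}(\Omega)$ for $1\le p<n$ and of $BV(\Omega)\hookrightarrow L^{1}(\Omega)$ with the dimensional exponent, as recorded in \cite[Section 5.2.3 and Section 9.1.7]{Mazya2011}. Since the constant in \eqref{eq:intro2} depends only on $\Omega$, so do the resulting embedding constants, which gives the first two displayed inequalities with a single $k=k(\Omega)$.

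For the trace I would use that, again by \cite[Section 9.6.4]{Mazya2011}, the existence of a bounded linear operator $T:BV(\Omega)\to L^1(\de\Omega)$ agreeing with restriction on functions continuous up to $\de\Omega$ follows once one has both \eqref{eq:intro3} and the identity $P(\Omega)=\H^{n-1}(\de\Omega)$. The first ingredient is Theorem \ref{prop:traceinequality} once more. For the second I would decompose the topological boundary by density, $\de\Omega=(\de\Omega\cap\Omega^{(0)})\cup(\de\Omega\cap\Omega^{(1)})\cup(\de\Omega\cap\de^e\Omega)$: the middle piece is $\H^{n-1}$-null by hypothesis; the first piece is empty because $\Omega$ supports \eqref{eq:intro2}, so Lemma \ref{lem:PH} applies; and, $\Omega$ being open, every interior point has density one and every exterior point density zero, whence $\de^e\Omega\subseteq\de\Omega$ and the last piece is all of $\de^e\Omega$. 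Federer's Structure Theorem (Theorem \ref{thm:Fed}) gives $\de^*\Omega\subseteq\de^e\Omega$ with $\H^{n-1}(\de^e\Omega\setminus\de^*\Omega)=0$, and Theorem \ref{thm:DeGiorgi}(iii) identifies $\H^{n-1}(\de^*\Omega)$ with $P(\Omega)$; combining these, $\H^{n-1}(\de\Omega)=\H^{n-1}(\de^e\Omega)=\H^{n-1}(\de^*\Omega)=P(\Omega)$.

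Everything is then in place. The routine parts are the two appeals to \cite{Mazya2011}; the one point that deserves a little care is the boundary bookkeeping behind $P(\Omega)=\H^{n-1}(\de\Omega)$, that is, checking that the density assumption on $\de\Omega$ together with the conclusion of Lemma \ref{lem:PH} genuinely forces $\H^{n-1}(\de\Omega\setminus\de^*\Omega)=0$, and that the structural hypotheses of the cited characterizations (openness, boundedness, connectedness, finite perimeter) are all met by $\Omega$.
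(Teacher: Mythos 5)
Your proposal is correct and follows essentially the same route as the paper's own proof: Theorem \ref{prop:traceinequality} plus Lemma \ref{lem:relperimeter} give \eqref{eq:intro3} and \eqref{eq:intro2}, the embeddings follow from \cite[Sections 5.2.3 and 9.1.7]{Mazya2011}, and the trace from \cite[Section 9.6.4]{Mazya2011} after establishing $P(\Omega)=\H^{n-1}(\de\Omega)$ via Lemma \ref{lem:PH}, the density hypothesis, and Federer's theorem. Your explicit density decomposition of $\de\Omega$ is just a slightly more detailed write-up of the same boundary bookkeeping the paper performs.
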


\begin{proof}
Being $\Omega$ a connected minimizer with $\H^{n-1}(\Omega^{(1)}\cap \de \Omega)=0$, by Theorem \ref{prop:traceinequality} we know that \eqref{eq:intro3} holds and so does \eqref{eq:intro2} via Lemma \ref{lem:relperimeter}. Thus the first part of the claim on the Sobolev and $BV$ embeddings follows immediately by \cite[Section 5.2.3 and Section 9.1.7]{Mazya2011}), being $\Omega$ open, bounded and connected. 

On the other hand, since $\Omega$ is a minimizer it must have finite weighted perimeter, thus finite perimeter by Remark \ref{prop:boundPg}. Then, as both $\Omega^{(1)}\cap \de \Omega$ (by hypothesis) and $\Omega^{(0)}\cap \de \Omega$ (by Lemma \ref{lem:PH}) have $\H^{n-1}$ null measure, Federer's theorem implies that $P(\Omega) =\H^{n-1}(\de \Omega)$. Then, the second claim  immediately follows by \cite[Section 9.6.4]{Mazya2011}.
\end{proof}

\begin{rem}
Notice that $\Omega \in \mathcal{C}^\alpha_{f,g}(\Omega)$ in general does not imply that the perimeter and the $(n-1)$-dimensional Hausdorff measure of the topological boundary agree so that the hypothesis $\H^{n-1}(\Omega^{(1)} \cap \de \Omega)=0$ can not be dropped. For instance, consider the unit disk $B_1$ and let $C_0^{\e} \subset [-\e, \e]\times \{0\}$. Iteratively take a decreasing sequence $C^\e_i$ of compact subsets of $C^\e_0$ obtained at each step $i$ by removing the $2^{i-1}$ open segments $S_j^i$ for $j=1,\dots,2^{i-1}$ of length $\H^1(S^i_j) = 2^{1-2i}\H^1(C^\e_{i-1})$ and placed in the middle of each closed segment of $C^\e_{i-1}$. The limit set $C^\e = \lim_i C^\e_i$ is a Cantor set of positive measure. Let $\delta>0$ be fixed, set
\[
f_\delta(x) = \begin{cases}
1-\sqrt{1- \left(|x|-\delta\right)^2} & \text{if }x\in (-\delta, \delta),\\
0 & \text{otherwise,}
\end{cases}
\]
and
\[
F_\delta =\{ (x,y)\in \R^2\,:\, |x|\le \delta,\ |y|\leq f_\delta(x) \}\,.
\]
For $i\in \N$ we set $\delta_{i} = 2^{-2i}\H^{1}(C^{\e}_{i-1})$ and $m^{i}_{j}$ as the midpoint of $S^{i}_{j}$, then define  
\[
F^{\e} = \bigcup_{i\in \N}\bigcup_{j=1}^{2^{i-1}} F^{i}_{j}\,,
\]
where $F^{i}_{j} = m^{i}_{j} + F_{\delta_{i}}$. For $\e<<1$, the set $\Omega \setminus \overline{F^\e}$ can be shown to be the minimizer of the (classical) Cheeger problem in itself and clearly $\H^1(\de \Omega) > P(\Omega)$ as its topological boundary contains the Cantor set $C^\e$ of positive measure. The full computations are contained in the forthcoming paper \cite{LeoSar20162}.
\end{rem}

\begin{rem}
One would like to have the same result of Theorem \ref{thm:main} for any connected minimizer $A$. To achieve this, one must prove that such minimizers $A$ are open sets such that $\H^{n-1}(A^{(1)}\cap \de A)=0$. In the standard case, or even in the case of a triplet $(1,1,\alpha)$ it is easily shown that the hypothesis $|\de \Omega|=0$ implies the openness of minimizers, and that the hypothesis $\H^{n-1}(\Omega^{(1)}\cap \de \Omega)=0$ implies the same holds with $A$ in place of $\Omega$. This is a consequence of the regularity theory which can be employed since $A$ is a perimeter-minimizer at fixed volume. Therefore $\de A\cap \Omega$ is analytic possibly except for a closed singular set whose Hausdorff dimension is at most $n-8$ (where $n$ denotes the dimension of $\Omega \subset \R^n$). We briefly show these two facts.

Suppose that $|\de \Omega|=0$. Then,  $A$ is Lebesgue equivalent to its interior points $A^\circ$. Indeed take a sequence $\Omega_j$ of open subset relatively compact in $\Omega$ such that $\Omega = \cup_j \Omega_j$. One has
\[
|A\setminus A^\circ| \le |\de A \cap \de \Omega| +\sum_j |(A\setminus A^\circ)\cap \Omega_j| =0\,,
\]
where the first term is zero since $|\de \Omega|=0$, and the second is zero as $\de A\cap \Omega_j$ is an analytic hyper-surface (except for a negligible closed set). 

Suppose now that $\H^{n-1}(\Omega^{(1)}\cap \de \Omega)=0$. Then on one hand, $\de A \cap \de \Omega$ can not have points of density $1$ for $A$ (as it does not even have points of density $1$ for $\Omega$). On the other hand, by regularity one has $\H^{n-1}(A^{(1)}\cap (\de A \cap \Omega))=0$.

The previous reasoning can be applied whenever one dispose of a regularity theory of isoperimetric sets with densities. As of now, such results are available only when dealing with the same weight at both perimeter and volume, i.e. $f=g$ under $C^{k,\gamma}$ regularity (see \cite{Mor03}) or being lower semi-continuous and bounded from above and below (see \cite{CP15}).
\end{rem} 

\section*{Conflict of Interest}
The author declares that he has no conflict of interest.

\section*{Acknowledgements}
The author would like to thank Gian Paolo Leonardi for proposing the problem, for the careful reading of the manuscript and his useful comments.

\bibliographystyle{plain}

\bibliography{isoperimetricCheeger}

\end{document}